\newtheorem{theorem}{Theorem}
\newtheorem{lemma}{Lemma}[section]
\newtheorem{proposition}{Proposition}[section]
\theoremstyle{remark}
\def\mP{{\mathbb P}}
\def\mQ{{\mathbb Q}}
\def\mZ{{\mathbb Z}}
\def\n{\noindent}
\def\b{\bigskip}
\def\m{\medskip}
\def\t{\tilde}
\begin{document}

\title[A complex surface of general type with $p_g=0, K^2=2$ and
       $H_1 = {\mZ}/2\mZ,\, {\mZ}/3\mZ$]{A complex surface of general type
       with $p_g=0,$ $K^2=2$ and $H_1 = {\mZ}/2\mZ,\, {\mZ}/3\mZ$}

\author{Yongnam Lee and Jongil Park}

\address{Department of Mathematics, Sogang University,
         Sinsu-dong, Mapo-gu, Seoul 121-742, Korea}

\email{ynlee@sogang.ac.kr}

\address{Department of Mathematical Sciences, Seoul National University,
         San 56-1, Sillim-dong, Gwanak-gu, Seoul 151-747, Korea}

\email{jipark@math.snu.ac.kr}

\date{August 31, 2008}

\subjclass[2000]{Primary 14J29; Secondary 14J10, 14J17, 53D05}

\keywords{$\mQ$-Gorenstein smoothing, rational blow-down,
          surface of general type}

\begin{abstract}
 As the sequel to~\cite{LP}, we construct a minimal complex surface of general
 type with $p_g=0$, $K^2=2$ and $H_1=\mZ/2\mZ$ using a rational blow-down
 surgery and $\mQ$-Gorenstein smoothing theory. We also present an example of
 $p_g=0, K^2=2$ and $H_1=\mZ/3\mZ$.
\end{abstract}

\maketitle

 One of the fundamental problems in the classification of complex
 surfaces is to find a new family of surfaces of general type with $p_g=0$
 and $K^2 >0$. In this paper we construct new complex surfaces
 of general type with $p_g=0$ and $K^2=2$.
 The first example of a minimal complex surface of general type with
 $p_g=0$ and $K^2 =2$ was constructed by Campedelli~\cite{Cam} in the
 $1930$'s as a ramified double cover of $\mP^2$; more precisely as the double
 cover of $\mP^2$ branched along a reducible curve of degree $10$ with
 $6 \, [3, 3]$ points not lying on a conic.
 Nowadays minimal surfaces of general type with
 $p_g=0$ and $K^2=2$ are called (numerical) Campedelli surfaces.
 For Campedelli surface $X$,
 the number of elements in the torsion subgroup of $H^2(X; \mZ)$ is
 bounded by $9$~\cite{Reid}. Although many families of non-simply connected
 complex surfaces of general type with $p_g =0$ and $K^2=2$ have been
 constructed (refer to Chapter VII, \cite{BHPV}) and the
 classifications are completed for some torsion groups~\cite{MP},
 until now it is not known whether there is a minimal complex surface
 of general type with $p_g=0$, $K^2 =2$ and $H_1=\mZ/2\mZ$ (or $\mZ/3\mZ)$.

 Recently we constructed a simply connected surface of general
 type with $p_g=0$ and $K^2 =2$ using a rational blow-down surgery and
 $\mQ$-Gorenstein smoothing theory~\cite{LP}.
 In this paper we continue to construct minimal complex surfaces of general
 type with $p_g=0$, $K^2=2$ and $H_1=\mZ/2\mZ,\, \mZ/3\mZ$
 using the same technique.
 The first key ingredient of this paper is to find right rational
 surfaces $Z$ which make it possible to get such complex surfaces.
 Once we have a right rational surface $Z$, then
 we can obtain a minimal complex surface of general type with $p_g=0$
 and $K^2 =2$ by applying a rational blow-down surgery and
 $\mQ$-Gorenstein smoothing theory developed in~\cite{LP} to $Z$.
 And then we show that the surface has $H_1 = \mZ/2\mZ$ (or $\mZ/3\mZ)$,
 which is the second key ingredient of this article.
 Since almost all the proofs except the computation of homology groups
 are basically the same as the proofs of the main construction in~\cite{LP},
 we only explain how to construct such surfaces and
 how to compute the first homology groups.
 The main results of this paper are the following

\begin{theorem}
\label{thm-main}
 There exists a minimal complex surface of general type with
 $p_g=0$, $K^2 =2$ and $H_1=\mZ/2\mZ$.
\end{theorem}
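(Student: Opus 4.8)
The plan is to imitate the construction of \cite{LP} step by step, changing only the input rational surface so as to create torsion in $H_1$. First I would build a rational surface $Z$ --- concretely a blow-up of $\mP^2$ at a configuration of points chosen to lie on an auxiliary pencil of curves --- so that $Z$ contains a collection of mutually disjoint linear chains of smooth rational curves whose dual graphs are the resolution graphs of cyclic quotient singularities of class $T$ (Wahl singularities $\tfrac{1}{d^{2}}(1,da-1)$). Contracting these chains produces a normal projective surface $X$, and the points on $Z$ are arranged so that $X$ has $p_g(X)=0$, $K_X^{2}=2$, and $K_X$ nef and big; the numerology of which chains to contract is dictated by the requirement that $K_Z^{2}$ drop to $2$ and that the quotient singularities be of class $T$. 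The one genuinely new requirement --- compared with the simply connected surface of \cite{LP} --- is that the arrangement be chosen so that a suitable loop survives into the smoothing to generate $\mZ/2\mZ$.

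Next I would apply the $\mQ$-Gorenstein smoothing theory of \cite{LP} to $X$. Because every singularity of $X$ is of class $T$, there is no local obstruction, and --- exactly as in \cite{LP} --- the global obstruction vanishes (the relevant second cohomology, equivalently the cokernel of the local-to-global $\Ext^{1}$ map, is zero), so $X$ has a $\mQ$-Gorenstein smoothing $\cX\to\Delta$. For a general $t$ the fibre $X_t$ is smooth, and flatness together with the semicontinuity arguments recorded in \cite{LP} give $p_g(X_t)=p_g(X)=0$ and $K_{X_t}^{2}=K_X^{2}=2$; since $K_X$ is nef and big this forces $X_t$ to be of general type, and minimality (equivalently $K_{X_t}$ nef, no $(-1)$-curve) follows by the same argument as in \cite{LP}. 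So $X_t$ is a minimal complex surface of general type with $p_g=0$ and $K^2=2$, and it only remains to compute $H_1(X_t)$.

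For the homology computation I would use the topological description of the smoothing: $X_t$ is diffeomorphic to the surface obtained from $Z$ by removing a plumbed tubular neighborhood of each contracted chain $\mathcal C_{d,a}$ and gluing in its Milnor fibre, a rational homology ball $B_{d,a}$ with $\pi_1(B_{d,a})\cong\mZ/d\mZ$ and $\pi_1(\partial B_{d,a})\cong\mZ/d^{2}\mZ$ mapping onto it. Applying the Seifert--van Kampen theorem to $X_t = Z_0 \cup \bigcup B_{d,a}$, where $Z_0$ is the complement of the chains, one computes $\pi_1(Z_0)$ from the geometry of the arrangement --- it is generated by meridians of the removed curves modulo the relations imposed by the ambient rational surface --- and then imposes the relation carried by each $B_{d,a}$. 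Carrying this through and abelianizing should yield $H_1(X_t)\cong\mZ/2\mZ$; along the way one must check that $\pi_1$ does not collapse all the way to the trivial group and that no further torsion appears.

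The main obstacle is precisely this last step. The smoothing is only understood away from the central fibre, so one must keep careful track of meridians and of the gluing relations along the lens spaces $\partial B_{d,a}$ in order to pin down $\pi_1(X_t)$, and in particular to show the outcome is \emph{exactly} $\mZ/2\mZ$ --- neither trivial (as it was in \cite{LP}) nor larger. Designing the point configuration on $Z$ so that this computation comes out as $\mZ/2\mZ$ while simultaneously keeping $K_X$ nef and big and all singularities of class $T$ is the real content of the theorem; everything else is a faithful transcription of \cite{LP}.
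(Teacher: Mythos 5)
Your outline reproduces the paper's strategy faithfully --- blow up a rational surface to create disjoint class-$T$ chains, contract, apply the $\mQ$-Gorenstein smoothing machinery of \cite{LP}, and compute $H_1$ of the resulting rational blow-down. But as written it is a plan, not a proof: the two steps you yourself identify as ``the real content of the theorem'' are exactly the ones you leave unexecuted. You never exhibit the surface $Z$. The paper's construction is quite specific: it starts from a rational elliptic surface $Y=\mP^2\sharp 9\overline{\mP}^2$ with one $\tilde E_6$-fiber, one $I_2$-fiber and two nodal fibers, and performs eighteen carefully ordered blow-ups to produce four disjoint configurations $C_{22,15}$, $C_{16,11}$, $C_{4,1}$ and $C_{2,1}$. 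Finding such a configuration with the right numerology ($K^2$ dropping to $2$, all chains of class $T$, nonvanishing obstruction-free smoothing) is not something one can wave at; it is the first half of the theorem.

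The second gap is the sentence ``Carrying this through and abelianizing should yield $H_1(X_t)\cong\mZ/2\mZ$.'' That is the entire second half of the proof. The paper does this in two genuinely nontrivial steps: (a) an upper bound $H_1\in\{0,\mZ/2\mZ\}$, obtained because the proper transform $\t A$ of a line meets the $(-4)$-curve $\t B$ (the $C_{2,1}$ configuration) so that its image under $\partial_{\ast}$ is $2\t\delta$ in $H_1(L(4,-1))=\mZ/4\mZ$ --- this is the concrete mechanism by which the torsion is capped at order $2$, which your ``a suitable loop survives'' does not pin down; and (b) a lower bound $H_1\neq 0$, proved by listing explicit generators of $H_2(Z;\mZ)$ (Lemmas~\ref{lem-1}--\ref{lem-3}), computing their images in $\mZ/22\mZ\oplus\mZ/16\mZ\oplus\mZ/4\mZ\oplus\mZ/2\mZ$ (Lemma~\ref{lem-4}), and showing by a parity argument on a system of four linear congruences that $(0,0,0,1)$ is not in the image, so $\partial_{\ast}$ is not surjective. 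Your proposed route via Seifert--van Kampen and $\pi_1(Z_0)$ is in principle viable but strictly harder than the paper's purely homological argument, and ``one computes $\pi_1(Z_0)$ from the geometry of the arrangement'' hides a computation at least as delicate as the one you are trying to avoid. Without the explicit surface and the explicit congruence computation, the proposal does not establish the theorem.
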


\begin{theorem}
\label{thm-main2}
 There exists a minimal complex surface of general type with
 $p_g=0$, $K^2 =2$ and $H_1=\mZ/3\mZ$.
\end{theorem}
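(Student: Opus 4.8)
To prove Theorem~\ref{thm-main2} the plan is to run the construction of~\cite{LP} with a carefully chosen new input, so that the only genuinely new work is the computation of the first homology. First I would search for a rational surface $Z$ carrying a disjoint union $C=\bigcup_{i}C_i$ of linear chains of smooth rational curves, each $C_i$ being the exceptional set of the minimal resolution of a cyclic quotient singularity of class $T$ (a Wahl singularity $\tfrac{1}{n_i^2}(1,n_i a_i-1)$), and arranged so that the contraction $p\colon Z\to X$ of $C$ produces a normal projective surface with $K_X$ nef and big, $K_X^2=2$ and $p_g(X)=q(X)=0$. In practice $Z$ is built from an explicit configuration of lines and conics (or a pencil of plane curves) on $\mP^2$ by blowing up the prescribed base points and infinitely near points; one keeps track of $K^2$, which drops by one at each blow-up and is raised by a controlled amount when a Wahl chain is contracted, so that the final value is forced to be $2$. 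The new constraint here, absent from~\cite{LP}, is that the incidence pattern of the configuration must also be chosen so that the fundamental group computation below returns $\mZ/3\mZ$; in effect this is what pins down $Z$.

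Granting such a $Z$, the next steps are verbatim~\cite{LP}. The surface $X$ has only class $T$ singularities, so by the vanishing of the relevant local and global $\mQ$-Gorenstein obstruction spaces established there, $X$ admits a $\mQ$-Gorenstein smoothing $\cX\to\Delta$ whose general fibre $X_t$ is a smooth projective surface with $K_{X_t}^2=K_X^2=2$ and $p_g(X_t)=p_g(X)=0$; and since $K_X$ is nef and big and $X_t$ is a small deformation of $X$, the surface $X_t$ is minimal of general type.

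It remains to prove $H_1(X_t;\mZ)=\mZ/3\mZ$, which is where the real difficulty lies. Up to diffeomorphism $X_t$ is obtained from $Z$ by rationally blowing down along the chains $C_i$: a tubular neighbourhood $N(C_i)$, whose boundary is the lens space $L_i=L(n_i^2,\,n_i a_i-1)$, is replaced by the Milnor fibre $B_i$ of the $\mQ$-Gorenstein smoothing of $\tfrac{1}{n_i^2}(1,n_i a_i-1)$, a rational homology ball with $\pi_1(B_i)=\mZ/n_i\mZ$ (the quotient of $\pi_1(L_i)=\mZ/n_i^2\mZ$ by the subgroup generated by the $n_i$-th power of a generator). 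Writing $Z^{\circ}=Z\smallsetminus\bigcup_i\operatorname{int}N(C_i)$, a Mayer--Vietoris argument presents $H_1(X_t)$ as the cokernel of $\bigoplus_i H_1(L_i)\to H_1(Z^{\circ})\oplus\bigoplus_i H_1(B_i)$, and equivalently van Kampen gives
\[
 \pi_1(X_t)\;\cong\;\pi_1(Z^{\circ})\Big/\big\langle\!\big\langle\,\lambda_1^{\,n_1},\dots,\lambda_k^{\,n_k}\,\big\rangle\!\big\rangle ,
\]
where $\lambda_i\in\pi_1(Z^{\circ})$ is the image of a generator of $\pi_1(L_i)$, written as a word in the meridians of the curves of $C_i$. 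So the computation breaks into: (a) determine $\pi_1(Z^{\circ})$ (equivalently $H_1(Z^{\circ})$) by van Kampen applied to $\mP^2$ minus the plane configuration, pulled through the blow-ups — generators being the meridians of the configuration curves and of the exceptional curves, relations coming from the incidences and the linear equivalences among the curve classes; (b) impose the relations $\lambda_i^{\,n_i}=1$; (c) abelianize and identify the result as $\mZ/3\mZ$. Since all $X_t$ with $t\neq0$ are diffeomorphic, carrying this out once suffices.

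The main obstacle is step (a), together with the search in the first paragraph for a $Z$ making step (c) come out right: these groups depend delicately on the exact incidence data; one must verify that imposing the rational-ball relations does not collapse $\pi_1(Z^{\circ})$ all the way to the trivial group (as happens in the simply connected construction of~\cite{LP}) and that the surviving group, after abelianization, is exactly $\mZ/3\mZ$ and not a larger cyclic group. By contrast, the existence of the $\mQ$-Gorenstein smoothing, the equalities $K^2=2$ and $p_g=0$, minimality, and general type all follow from~\cite{LP} once $Z$ is in hand; this is why the paper only records the construction of $Z$ and the homology computation.
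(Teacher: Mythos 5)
Your outline correctly reproduces the general strategy (build an explicit rational surface $Z$ with disjoint class~$T$ chains, contract, apply the $\mQ$-Gorenstein smoothing machinery of~\cite{LP}, and compute $H_1$ of the resulting rational blow-down by Mayer--Vietoris/van Kampen), but as a proof of an \emph{existence} theorem it has a genuine gap: you never exhibit the surface. The sentence ``First I would search for a rational surface $Z$ carrying \dots'' is precisely where the entire content of the theorem lives, and everything after it is conditional on a successful search. The paper's proof consists essentially of producing this example explicitly: one starts from a rational elliptic surface $Y'=\mP^2\sharp 9\overline{\mP}^2$ with an $I_8$-fiber, an $I_2$-fiber and two nodal fibers, blows up ten times at prescribed (and infinitely near) points to get $Z'=Y'\sharp 10\overline{\mP}^2$ containing two chains $C_{9,5}=[2,7,2,2,3]$ and one chain $D_{2,3}=[2,3,4]$, and then verifies the numerology and the homology. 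Without specifying such a configuration, steps (a)--(c) of your plan cannot even be begun, and there is no argument that the answer comes out to be $\mZ/3\mZ$ rather than $0$ or something larger.

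A second, more structural problem is that your search space is too narrow to contain the paper's example. You restrict to Wahl singularities $\tfrac{1}{n_i^2}(1,n_ia_i-1)$, whose Milnor fibres are rational homology balls; but the example actually uses the chain $D_{2,3}=[2,3,4]$, which contracts to the class~$T$ singularity $\tfrac{1}{18}(1,11)=\tfrac{1}{dn^2}(1,dna-1)$ with $d=2$, $n=3$. Its Milnor fibre $M_{2,3}$ is \emph{not} a rational ball: it is negative definite with $b_2=1$ (cf.~\cite{Man}). Consequently your Mayer--Vietoris presentation of $H_1(X_t)$ as a cokernel of $\bigoplus_iH_1(L_i)\to H_1(Z^\circ)\oplus\bigoplus_iH_1(B_i)$ must be corrected for this piece (the term $H_2(M_{2,3})\neq 0$ enters), and the $K^2$ bookkeeping changes as well, since contracting such a chain of length $\ell$ raises $K^2$ by $\ell-d+1$ rather than $\ell$. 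Your van Kampen formula with the relation $\lambda^3=1$ does survive, since $\pi_1(M_{2,3})\cong\mZ/3\mZ$, but the framework as written would steer the search away from the configuration that actually works. Finally, note that the paper avoids computing $\pi_1(Z^\circ)$ outright: it first bounds $H_1$ from above using a single lens space boundary component, and then proves non-vanishing by showing a specific element is not in the image of $i_*\circ\partial_*$ on second homology --- a purely homological argument that is considerably more tractable than the full van Kampen computation you propose.
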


\m

 {\em Acknowledgements}.
 The authors would like to thank Heesang Park for pointing out some
 errors in the computation of Lemma~\ref{lem-4}.
 Yongnam Lee was supported by the grant No. R01-2007-000-10948-0
 from the KOSEF.
 Jongil Park was supported by SBS Foundation Grant in 2007 and
 he also holds a joint appointment in the Research Institute of Mathematics,
 Seoul National University.

\b

\section{The main construction of a surface with $K^2 =2$ and $H_1=\mZ/2\mZ$}
\label{sec-1}

 We begin with a rational elliptic surface
 $Y=\mP^2\sharp 9\overline{\mP}^2$ which has one $\tilde E_6$-singular
 fiber, one $I_2$-singular fiber, and two nodal singular fibers
 used in Section 3 in~\cite{LP} (Figure~\ref{Fig-Y}).

 \begin{figure}[hbtb]
 \begin{center}
 \setlength{\unitlength}{1mm}
 \includegraphics[height=3.5 cm]{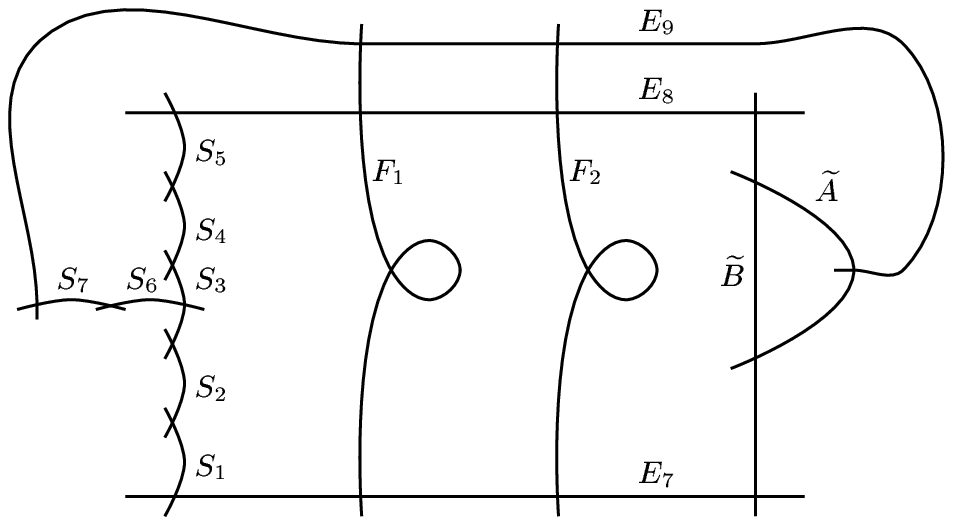}
 \end{center}
 \vspace{-1 em}
 \caption{A rational elliptic surface $Y=\mP^2\sharp 9\overline{\mP}^2$}
 \label{Fig-Y}
 \end{figure}

 {\em Notations.} We denote a line in $\mP^2$ by $H$
 and exceptional curves in $Y=\mP^2\sharp 9\overline{\mP}^2$
 by $E_1, E_2, \ldots, E_9$. Equivalently we use the same notation
 $H, E_1, E_2, \ldots, E_9$ for the standard generators of $H_2(Y; \mZ)$
 which represent a line and exceptional curves in $Y$ respectively.
 We also denote the rational curves lying in the $\t E_6$-singular fiber
 by $S_1, S_2, \ldots, S_7$ and two nodal fibers by $F_1, F_2$ and two rational
 curves lying in the $I_2$-singular fiber by $\t A, \t B$ respectively.
 In fact $\t A$ and $\t B$ are proper transforms of a line $A$ and a
 conic $B$ lying in $\mP^2$ respectively.

\m

\n{\bf Main Construction.} We first blow up at two singular points
 in the nodal fibers $F_1, F_2$ on $Y$.
 Then the proper transforms $\t F_1, \t F_2$ of $F_1, F_2$ will be
 rational $(-4)$-curves whose homology classes are $[\t F_1]=[F_1]-2e_1$
 and $[\t F_2]=[F_2]-2e_2$, where $e_1, e_2$ are new exceptional curves
 in $Y\sharp 2\overline{\mP}^2$ coming from two blowing ups.
 Next, we blow up six times at the intersection points
 between two sections $E_7, E_8$ and $\t F_1, \t F_2, \t B$.
 It makes the self-intersection number of the proper transforms
 $E_7$, $E_8$ and $\t B$ to be $-4$ respectively.
 Let us denote six new exceptional curves arising from six times
 blowing ups by $e_3, e_4, \ldots, e_8$ respectively.
 Now we blow up twice successively at the intersection point between
 the proper transform of $E_7$ and the exceptional curve $e_3$
 in the total transform of $F_1$.
 It makes a chain of $\mP^1$'s,
 ${\overset{-6}{\circ}}-{\overset{-2}{\circ}}-{\overset{-2}{\circ}}$,
 lying in the total transform of $F_1$.
 Let us denote two new exceptional curves arising from twice blowing
 ups by $e_{9}, e_{10}$ respectively.
 We blow up again four times successively at the intersection point
 between the proper transform of $E_7$ and the exceptional curve
 $e_4$ in the total transform of $F_2$, so that a chain of $\mP^1$'s,
 ${\overset{-6}{\circ}}-{\overset{-2}{\circ}}-{\overset{-2}{\circ}}
 -{\overset{-2}{\circ}}-{\overset{-2}{\circ}}$,
 lies in the total transform of $F_2$.
 Let us denote four new exceptional curves arising from four times blowing
 ups at this step by $e_{11}, e_{12}, e_{13}, e_{14}$ respectively.
 We note that it makes the self-intersection number of the proper
 transform of $E_7$ to be $-10$.
 Then we blow up twice successively
 at the intersection point between rational $(-2)$-curve in the end
 of linear chain
 ${\overset{-6}{\circ}}-{\overset{-2}{\circ}}-{\overset{-2}{\circ}}
 -{\overset{-2}{\circ}}-{\overset{-2}{\circ}}$
 and the exceptional curve $e_{14}$.
 Let us denote two new exceptional curves arising from twice blowing
 ups at this step by $e_{15}, e_{16}$ respectively.
 It changes
 ${\overset{-6}{\circ}}-{\overset{-2}{\circ}}-{\overset{-2}{\circ}}
 -{\overset{-2}{\circ}}-{\overset{-2}{\circ}}$
 to
 ${\overset{-6}{\circ}}-{\overset{-2}{\circ}}-{\overset{-2}{\circ}}
 -{\overset{-2}{\circ}}-{\overset{-4}{\circ}}$,
 and it produces a chain of $\mP^1$'s,
 ${\overset{-2}{\circ}}-{\overset{-2}{\circ}}-{\overset{-10}{\circ}}
  -{\overset{-2}{\circ}}-{\overset{-2}{\circ}} -{\overset{-2}{\circ}}
  -{\overset{-2}{\circ}}-{\overset{-2}{\circ}}-{\overset{-4}{\circ}}$,
 which contains the proper transform of two sections $E_7, E_8$ and
 a linear chain of $\mP^1$'s in the $\t E_6$-singular fiber.
 We also blow up twice successively at the intersection point between
 rational $(-6)$-curve in
 ${\overset{-6}{\circ}}-{\overset{-2}{\circ}}-{\overset{-2}{\circ}}
 -{\overset{-2}{\circ}}-{\overset{-4}{\circ}}$
 and the exceptional curve $e_2$ appeared by the blowing up at the
 singular point of one nodal fiber $F_2$.
 Let us denote two new exceptional curves arising from twice blowing
 ups at this step by $e_{17}, e_{18}$ respectively.
 Then the chain
 ${\overset{-6}{\circ}}-{\overset{-2}{\circ}}-{\overset{-2}{\circ}}
 -{\overset{-2}{\circ}}-{\overset{-4}{\circ}}$
 changes to
 ${\overset{-2}{\circ}}-{\overset{-2}{\circ}}-{\overset{-8}{\circ}}
 -{\overset{-2}{\circ}}-{\overset{-2}{\circ}}
  -{\overset{-2}{\circ}}-{\overset{-4}{\circ}}$ by adding two new
 rational $(-2)$-curves.
 Finally, we have a rational surface $Z:= Y \sharp 18{\overline \mP}^2$
 which contains four disjoint configurations:
 $C_{22,15}={\overset{-2}{\circ}}-{\overset{-2}{\circ}}-{\overset{-10}{\circ}}
 -{\overset{-2}{\circ}}-{\overset{-2}{\circ}}-{\overset{-2}{\circ}}
 -{\overset{-2}{\circ}}-{\overset{-2}{\circ}}-{\overset{-4}{\circ}}$,
 $C_{4,1}={\overset{-6}{\circ}}-{\overset{-2}{\circ}}-{\overset{-2}{\circ}}$,
 $C_{16,11}={\overset{-2}{\circ}}-{\overset{-2}{\circ}}-{\overset{-8}{\circ}}
 -{\overset{-2}{\circ}}-{\overset{-2}{\circ}}-{\overset{-2}{\circ}}
 -{\overset{-4}{\circ}}$, and $C_{2,1}={\overset{-4}{\circ}}$
 (Figure~\ref{Fig-Z}).

\begin{figure}[hbtb]
 \begin{center}
 \setlength{\unitlength}{1mm}
 \includegraphics[height=4 cm]{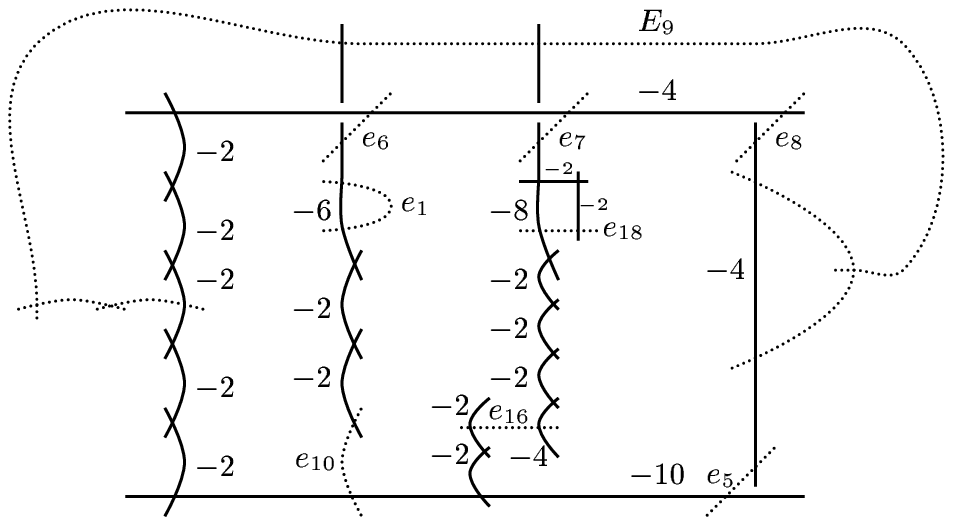}
 \end{center}
 \vspace{-1 em}
 \caption{A rational surface $Z= Y \sharp 18{\overline \mP}^2$}
 \label{Fig-Z}
\end{figure}

\m

 {\em Notations.} We use the same notation $C_{p,q}$ for both a
 smooth $4$-manifold obtained by plumbing
 disk bundles over the $2$-sphere instructed by
 $\underset{u_{k}}{\overset{-b_k}{\circ}}-\underset{u_{k-1}}
 {\overset{-b_{k-1}}{\circ}}-\cdots -\underset{u_2}{\overset{-b_{2}}{\circ}}
 -\underset{u_1}{\overset{-b_{1}}{\circ}}$
 and a linear chain of $2$-spheres, $\{u_{k}, u_{k-1},\ldots, u_1 \}$.
 Here $\frac{p^{2}}{pq-1} =[b_{k},b_{k-1}, \ldots, b_{1}]$
 is a continued fraction with all $b_{i} \geq 2$ uniquely determined
 by $p, q$, and $u_{i}$ represents an embedded $2$-sphere as well as
 a disk bundle over $2$-sphere whose Euler number is $-b_{i}$.

 \m

 Then we contract these four disjoint chains of $\mP^1$'s from $Z$
 so that it produces a normal projective
 surface, denoted by $X$, with four permissible singularities.
 Using the same technique as in~\cite{LP}, we are able to prove that
 $X$ has a $\mQ$-Gorenstein smoothing and a general fiber $X_t$ of
 the $\mQ$-Gorenstein smoothing is a minimal
 complex surface of general type with $p_g=0$ and $K^2=2$.
 Furthermore, the general fiber $X_t$ is diffeomorphic to a rational
 blow-down $4$-manifold $Z_{22,16,4,2}$ which is obtained from
 $Z= Y \sharp 18{\overline \mP}^2$ by replacing four disjoint
 configurations $C_{22,15},\, C_{16,11},\, C_{4,1}$ and $C_{2,1}$
 with corresponding rational balls
 $B_{22,15},\, B_{16,11},\, B_{4,1}$ and $B_{2,1}$ respectively.
 In the next section we will prove that the rational
 blow-down $4$-manifold $Z_{22,16,4,2}$ has
 $H_1(Z_{22,16,4,2}; \mZ)=\mZ/2\mZ$.
 Hence we obtain a main result - Theorem~\ref{thm-main}.

\b

\section{Proof of $H_1(Z_{22,16,4,2}; \mZ)=\mZ/2\mZ$}
\label{sec-2}

 In this section we compute the first homology group of a rational blow-down
 $4$-manifold $Z_{22,16,4,2}$ using geometric arguments and some elementary
 homology sequences.

 First note that the rational surface $Z= Y \sharp 18{\overline \mP}^2$
 contains four disjoint configurations
 - $C_{22,15},\, C_{16,11},\, C_{4,1}$ and $C_{2,1}$.
 Let us decompose the rational surface $Z$ into
 \[Z=Z_0 \cup\{C_{22,15}\cup C_{16,11} \cup C_{4,1} \cup C_{2,1}\} .\]
 Then the rational blow-down $4$-manifold $Z_{22,16,4,2}$
 can be decomposed into
 \[Z_{22,16,4,2}=
   Z_0 \cup\{B_{22,15}\cup B_{16,11} \cup B_{4,1} \cup B_{2,1}\} . \]
 Furthermore, one can also obtain the rational blow-down $4$-manifold
 $Z_{22,16,4,2}$ by two steps, i.e.
 one can first construct a new simply connected smooth $4$-manifold
 $Z'_{22,16,4}$ by performing a rational blow-down surgery along three
 configurations $C_{22,15},\, C_{16,11}$ and $C_{4,1}$ from $Z$, and then
 perform a rational blow-down surgery along $C_{2,1}$ from
 $Z'_{22,16,4}$ to get $Z_{22,16,4,2}$.
 Let us decompose $Z'_{22,16,4}= W \cup C_{2,1}$
 with $W=Z_0 \cup\{B_{22,15}\cup B_{16,11} \cup B_{4,1}\}$.
 Then $Z_{22,16,4,2}$ can also be decomposed into
 $Z_{22,16,4,2}= W \cup B_{2,1}$.

\begin{proposition}
\label{pro-1}
 $H_1(Z_{22,16,4,2}; \mZ) = 0$ or $H_1(Z_{22,16,4,2}; \mZ) = \mZ/2\mZ$.
\end{proposition}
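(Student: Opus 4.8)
The plan is to compute $H_1(Z_{22,16,4,2};\mZ)$ via Mayer--Vietoris, using the two-step decomposition $Z_{22,16,4,2} = W \cup B_{2,1}$, where $W = Z_0 \cup \{B_{22,15}\cup B_{16,11}\cup B_{4,1}\}$ is the result of rationally blowing down the three configurations $C_{22,15}, C_{16,11}, C_{4,1}$ from $Z$. First I would record the needed input about rational balls: $B_{p,q}$ has $H_1(B_{p,q};\mZ)=\mZ/p\mZ$, $H_2(B_{p,q};\mZ)=0$, and $\partial B_{p,q}$ is the lens space $L(p^2, pq-1)$ with $H_1 = \mZ/p^2\mZ$; moreover the inclusion $\partial B_{p,q}\hookrightarrow B_{p,q}$ induces on $H_1$ the surjection $\mZ/p^2\mZ \to \mZ/p\mZ$. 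In our case $p=2$ for $B_{2,1}$, so $\partial B_{2,1}=L(4,1)$ and $H_1(B_{2,1};\mZ)=\mZ/2\mZ$.

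Next I would observe that $Z'_{22,16,4}$ is simply connected (this is exactly the general-fiber statement: $X_t$ is simply connected in the three-fold rational blow-down performed in \cite{LP}, or follows from the same Van Kampen argument). From $W = Z'_{22,16,4}\setminus \mathrm{int}\,C_{2,1}$ and the fact that $C_{2,1}$ is a $(-4)$-disk bundle over $S^2$ with boundary $L(4,1)$, a Van Kampen / Mayer--Vietoris argument on $Z'_{22,16,4}=W\cup C_{2,1}$ shows $H_1(W;\mZ)$ is a quotient of $H_1(\partial C_{2,1};\mZ)=H_1(L(4,1);\mZ)=\mZ/4\mZ$; in particular $H_1(W;\mZ)$ is cyclic of order dividing $4$. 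Then for $Z_{22,16,4,2}=W\cup B_{2,1}$ with $W\cap B_{2,1}=L(4,1)$, the Mayer--Vietoris sequence
\[
H_1(L(4,1);\mZ)\longrightarrow H_1(W;\mZ)\oplus H_1(B_{2,1};\mZ)\longrightarrow H_1(Z_{22,16,4,2};\mZ)\longrightarrow 0
\]
(using $H_0$ injectivity since everything is connected) shows $H_1(Z_{22,16,4,2};\mZ)$ is a quotient of $H_1(W;\mZ)\oplus\mZ/2\mZ$ by the image of $\mZ/4\mZ$. Since the map $\mZ/4\mZ\to \mZ/2\mZ$ to the $B_{2,1}$ factor is the natural surjection, the cokernel is a quotient of $H_1(W;\mZ)$, hence cyclic of order dividing $4$; a slightly closer look at how the generator of $H_1(L(4,1))$ maps into $H_1(W)$ (it is the class that, inside $C_{2,1}$, bounds, so in $W$ it can have order up to $4$) pins the order down to $1$ or $2$, because the diagonal-type relation forces the order to divide $2$. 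This gives the Proposition.

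The main obstacle I anticipate is making the bound on $H_1(W;\mZ)$ — and the behaviour of the boundary class under the various inclusions — fully rigorous rather than just "a quotient of $\mZ/4\mZ$"; concretely one must track the homology class of the belt sphere / core of the $(-4)$-curve $C_{2,1}$ inside $Z'_{22,16,4}$ and inside $B_{2,1}$, and verify the relation it imposes is exactly divisibility by $2$. I would handle this by exhibiting an explicit $2$-sphere or surface in $Z$ meeting the relevant configurations with controlled intersection numbers (coming from the sections $E_7, E_8$ and the nodal fibers in the Main Construction) so that, after blow-down, it provides a $2$-chain whose boundary is twice the generator; combined with the order-dividing-$4$ upper bound this yields precisely $0$ or $\mZ/2\mZ$. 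The subsequent section then rules out the trivial case by producing an explicit nontrivial double cover, completing the proof of Theorem~\ref{thm-main}.
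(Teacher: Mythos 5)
Your proposal is correct and uses the same decomposition $Z_{22,16,4,2}=W\cup B_{2,1}$ and the same Mayer--Vietoris sequence as the paper, but the mechanism by which you extract the crucial factor of $2$ is different. The paper first pins down $H_1(W;\mZ)$ to $0$ or $\mZ/2\mZ$ by an explicit geometric input: in the exact sequence of the pair $(W,\partial W)$, the relative class $[\t A|_W]$ (the proper transform of the line $A$, which meets the conic $\t B$ in two points) has $\partial_*[\t A|_W]=2\t\delta$ in $H_1(\partial W;\mZ)=\mZ/4\mZ$, so $2\t\delta$ dies in $H_1(W;\mZ)$; the Mayer--Vietoris sequence then simply transfers this to $Z_{22,16,4,2}$. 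You instead only use that $Z'_{22,16,4}$ is simply connected to conclude that $H_1(W;\mZ)$ is cyclic of order $n$ dividing $4$, generated by the image of the meridian, and then observe that the cokernel of $\mZ/4\mZ\to\mZ/n\mZ\oplus\mZ/2\mZ$, $g\mapsto(1,1)$, is $H_1(W)/2H_1(W)\cong\mZ/\gcd(n,2)\mZ$, hence $0$ or $\mZ/2\mZ$ even if $n=4$. This is a valid and slightly more economical route for this proposition (it does not need the curve $\t A$ at all, which you only invoke as a fallback), though it leans more heavily on $\pi_1(Z'_{22,16,4})=1$ --- an input the paper also implicitly uses for the surjectivity of $H_1(\partial W)\to H_1(W)$, and which is asserted rather than proved in this paper (it is carried by the construction in~\cite{LP}). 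Your cokernel step should be written out explicitly (every coset of the image has a representative $(x,0)$ since the second coordinate is surjective, and $(x,0)$ lies in the image iff $x$ is even in $\mZ/n\mZ$), and note that your closing sentence misdescribes the sequel: the paper rules out the trivial case in Proposition~\ref{pro-2} by showing a non-surjectivity of $i_*\circ\partial_*$, not by exhibiting a double cover --- but that concerns Proposition~\ref{pro-2}, not the statement at hand.
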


\begin{proof}
 We first consider the following exact homology sequence for a pair
 $(W, \partial{W})$:
 \[\longrightarrow H_2(W, \partial{W};\mZ)
   \stackrel{\partial_{\ast}}{\longrightarrow} H_1(\partial{W}; \mZ)
   \stackrel{i_{\ast}}{\longrightarrow} H_1(W; \mZ) \longrightarrow 0 \]
 Note that $\partial{W}=L(4,-1)$ and a generator of
 $H_1(\partial{W}; \mZ)=\mZ/4\mZ$ can be represented by a normal
 circle, say $\t \delta$, of a disk bundle $C_{2,1}$ over $(-4)$-curve $\t B$.
 Since the image of a homology class $[\t A|_{W}] \in
 H_2(W, \partial{W};\mZ)$ under the boundary homomorphism
 $\partial_{\ast}$ is $2\t \delta \in H_1(\partial{W}; \mZ)=\mZ/4\mZ$,
 we have either $H_1(W; \mZ) = 0$ or $H_1(W; \mZ) = \mZ/2\mZ$.

 Next we consider the Mayer-Vietoris sequence for a triple
 $(Z_{22,16,4,2}; W, B_{2,1})$:
\small{
\begin{equation*}
  H_2(Z_{22,16,4,2};\mZ)
   \stackrel{\partial_{\ast}}{\rightarrow} H_1(L(4,-1); \mZ)
   \stackrel{i_{\ast}\oplus j_{\ast}}{\longrightarrow}
   H_1(W; \mZ)\oplus H_1(B_{2,1}; \mZ) \rightarrow
   H_1(Z_{22,16,4,2}; \mZ) \rightarrow 0
\end{equation*}}
\normalsize
 Since $j_{\ast}: H_1(L(4,-1); \!\mZ) \rightarrow H_1(B_{2,1}; \!\mZ)$
 is surjective, we have either $H_1(Z_{22,16,4,2}; \!\mZ)$ $=0$ or
 $H_1(Z_{22,16,4,2}; \mZ)= \mZ/2\mZ$ depending on the space $H_1(W; \mZ)$.
 In other words, if $H_1(W; \mZ)=0$, then we would have
 $H_1(Z_{22,16,4,2}; \mZ)=0$  and, if $H_1(W; \mZ)=\mZ/2\mZ$,
 then we would have $H_1(Z_{22,16,4,2}; \mZ)=\mZ/2\mZ$.
\end{proof}

 Before we prove that $H_1(Z_{22,16,4,2}; \mZ)=\mZ/2\mZ$, we introduce several
 lemmas which are critical in the computation of $H_1(Z_{22,16,4,2}; \mZ)$.
 Let us first consider the following two sets of homology classes lying in
 $H_2(Z; \mZ)$:
 ${\mathcal D}=\{\t A,\, E_3-E_6,\, E_6-E_9 \}$ and
 ${\mathcal E}=\{E_9,\, e_1,\, e_5,\, e_6,\, e_7,\, e_8,\, e_{10},\,
 e_{16},\, e_{18} \}$.
 Then one can easily get the following lemmas.

\begin{lemma}
\label{lem-1}
 (i) The homology classes of $\{u_i\,|\, 1\leq i \leq 9\}$ in $C_{22,15}$
 can be represented by
 $\{e_{15}-e_{16},\, e_{14}-e_{15},\, E_7-e_3-e_4-e_5-e_9-\cdots -e_{14},\,
 E_4-E_7,\, E_1-E_4,\, H-E_1-E_2-E_3,\, E_2-E_5,\, E_5-E_8,\,
 E_8-e_6-e_7-e_8\}$. \\
 (ii) The homology classes of $\{u_i\,|\, 1\leq i \leq 7\}$ in $C_{16,11}$
 can be represented by
 $\{e_{17}-e_{18},\, e_2-e_{17},\, F_2-2e_2-e_4-e_7-e_{17}-e_{18},\,
 e_4-e_{11},\, e_{11}-e_{12},\, e_{12}-e_{13},\,
 e_{13}-e_{14}-e_{15}-e_{16}\}$. \\
 (iii) The homology classes of $\{u_i\,|\, 1\leq i \leq 3\}$ in $C_{4,1}$
 can be represented by
 $\{F_1-2e_1-e_3-e_6,\, e_3-e_9,\, e_9-e_{10} \}$. \\
 (iv) The homology classes of $u_1$ in $C_{2,1}$
 can be represented by $\{\t B-e_5-e_8\}$.
\end{lemma}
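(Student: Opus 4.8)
The plan is to prove Lemma~\ref{lem-1} by a direct step-by-step bookkeeping along the eighteen blow-ups of the Main Construction, using nothing beyond the elementary transformation rule for divisor classes: if $\pi\colon\widetilde{S}\to S$ is the blow-up of a point $p$ with exceptional curve $e$ and $C\subset S$ is a reduced curve of multiplicity $m$ at $p$, then the proper transform of $C$ has class $\pi^{*}[C]-m\,e$. In the case at hand every blow-up center is either the node of one of the nodal fibers $F_1,F_2$ (multiplicity $2$) or a transverse intersection point of two curves already present (multiplicity $1$), so apart from the two classes $[\t F_i]=[F_i]-2e_i$ produced by the first two blow-ups, every coefficient that occurs is $0$ or $-1$. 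First I would record, from Section~3 of~\cite{LP}, the classes in $Y$ of the ingredients of the construction: the line $H$, the exceptional curves $E_1,\dots,E_9$, the nodal fibers $F_1,F_2$, the $I_2$-components $\t A,\t B$, the sections $E_7,E_8$, and, inside the $\t E_6$-fiber, the five $(-2)$-curves $E_4-E_7$, $E_1-E_4$, $H-E_1-E_2-E_3$, $E_2-E_5$, $E_5-E_8$; and I would note that no blow-up center lies on any of these five $(-2)$-curves, so their classes pass unchanged into $Z$ --- they are exactly the five curves of~(i) lying between the proper transforms of $E_7$ and $E_8$ in the chain.

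Next I would run through the construction in the stated order, naming the exceptional curves $e_3,\dots,e_{18}$ as they appear. After the first two blow-ups, $[\t F_i]=[F_i]-2e_i$. The six blow-ups at $E_7\cap\t F_1$, $E_7\cap\t F_2$, $E_7\cap\t B$, $E_8\cap\t F_1$, $E_8\cap\t F_2$, $E_8\cap\t B$ --- with $e_3,e_4,e_5$ on $E_7$ and $e_6,e_7,e_8$ on $E_8$, and with $e_5,e_8$ on $\t B$, $e_3,e_6$ on $\t F_1$, $e_4,e_7$ on $\t F_2$ --- carry $E_7$ to $E_7-e_3-e_4-e_5$, $E_8$ to $E_8-e_6-e_7-e_8$, $\t B$ to $\t B-e_5-e_8$, and $\t F_1,\t F_2$ to $F_1-2e_1-e_3-e_6$ and $F_2-2e_2-e_4-e_7$; since $\t B$ is then untouched for the rest of the construction, this already gives~(iv). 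The two blow-ups over $E_7\cap e_3$ (creating $e_9$, then $e_{10}$ at $E_7\cap e_9$) and the four over $E_7\cap e_4$ (creating $e_{11},\dots,e_{14}$, each at the intersection of $E_7$ with the latest exceptional curve) send $E_7$ to $E_7-e_3-e_4-e_5-e_9-e_{10}-e_{11}-e_{12}-e_{13}-e_{14}$, of square $-1-9=-10$ --- the $(-10)$-curve of~(i) --- and leave the chain $e_3-e_9,\,e_9-e_{10}$ hanging off $F_1-2e_1-e_3-e_6$, giving~(iii), and the chain $e_4-e_{11},\,e_{11}-e_{12},\,e_{12}-e_{13},\,e_{13}-e_{14}$ hanging off $F_2-2e_2-e_4-e_7$. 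The final four blow-ups --- at $(e_{13}-e_{14})\cap e_{14}$ and then at $(e_{13}-e_{14}-e_{15})\cap e_{15}$, creating $e_{15},e_{16}$; and at $(F_2-2e_2-e_4-e_7)\cap e_2$ and then at $(F_2-2e_2-e_4-e_7-e_{17})\cap e_{17}$, creating $e_{17},e_{18}$ --- replace $e_{13}-e_{14}$ by $e_{13}-e_{14}-e_{15}-e_{16}$ and $F_2-2e_2-e_4-e_7$ by $F_2-2e_2-e_4-e_7-e_{17}-e_{18}$, and attach the $(-2)$-curves $e_{14}-e_{15},\,e_{15}-e_{16}$ at the end of the $C_{22,15}$-chain and $e_2-e_{17},\,e_{17}-e_{18}$ at the end of the $C_{16,11}$-chain. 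Reading off the four resulting linear chains then gives the lists in (i)--(iv).

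Finally I would carry out the consistency check that these classes really do assemble into the prescribed configurations: each listed class $u_i$ should have the asserted square $-b_i$; consecutive classes should satisfy $u_i\cdot u_{i+1}=1$ while all non-consecutive pairs --- including pairs drawn from different lists, so that the four chains are pairwise disjoint --- should be orthogonal; and the sequences of squares should be the Hirzebruch--Jung continued fractions $\tfrac{22^{2}}{22\cdot 15-1}=[2,2,10,2,2,2,2,2,4]$, $\tfrac{16^{2}}{16\cdot 11-1}=[2,2,8,2,2,2,4]$, $\tfrac{4^{2}}{4\cdot 1-1}=[6,2,2]$ and $\tfrac{2^{2}}{2\cdot 1-1}=[4]$ demanded by the definition of $C_{p,q}$. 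I expect the main obstacle to be the one point that is invisible in the final formulas: at each instruction to ``blow up twice (resp.\ four times) successively at the intersection point'' one must check that, after performing a blow-up, the next center lies on the proper transform of $E_7$ (resp.\ of the relevant $(-6)$-curve or free-end $(-2)$-curve) and on the newest exceptional curve only --- and on no earlier $e_k$ --- so that all multiplicities stay equal to $1$ and the coefficient of each $e_k$ in each listed class is exactly the claimed $0$ or $-1$. Once these transversality statements, which are designed into the construction, are granted, everything else is mechanical; even the cancellations that make the chains disjoint, e.g.\ $(E_7-e_3-\cdots-e_{14})\cdot(e_{13}-e_{14}-e_{15}-e_{16})=0$ with the $e_{13}$- and $e_{14}$-contributions canceling, come out automatically.
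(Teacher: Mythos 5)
Your bookkeeping is correct and is exactly the justification the paper intends: Lemma~\ref{lem-1} is stated without proof ("one can easily get the following lemmas"), the classes being read off from the blow-up construction precisely as you trace them, and your consistency checks (squares $[2,2,10,2,2,2,2,2,4]$, $[2,2,8,2,2,2,4]$, $[6,2,2]$, $[4]$ and the adjacency/disjointness intersection numbers) all come out as claimed. No gap.
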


\begin{lemma}
\label{lem-2}
 The set of homology classes representing all generators in
 $C_{22,15} \cup C_{16,11} \cup C_{4,1} \cup C_{2,1} \cup {\mathcal D}
 \cup {\mathcal E}$ spans $H_2(Z; \mZ)$.
\end{lemma}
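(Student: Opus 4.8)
The statement to prove is Lemma~\ref{lem-2}: that the union of the listed configuration-generators together with $\mathcal D$ and $\mathcal E$ spans $H_2(Z;\mZ)$. The first thing I would do is a dimension count. The rational surface $Z = Y\sharp 18\overline{\mP}^2 = \mP^2\sharp 27\overline{\mP}^2$ has $b_2(Z) = 28$, so $H_2(Z;\mZ)\cong\mZ^{28}$ with the standard orthogonal-ish basis $H, E_1,\dots,E_9, e_1,\dots,e_{18}$. On the other side, $C_{22,15}$ contributes $9$ classes, $C_{16,11}$ contributes $7$, $C_{4,1}$ contributes $3$, $C_{2,1}$ contributes $1$, $\mathcal D$ contributes $3$, and $\mathcal E$ contributes $9$, for a total of $9+7+3+1+3+9 = 32$ classes. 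Since $32 > 28$, spanning is at least numerically possible; the real content is that these $32$ vectors actually have full rank $28$ over $\mZ$ — i.e. span a finite-index subgroup, and in fact all of $H_2(Z;\mZ)$ (index $1$, not just full rank over $\mQ$).

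**Key steps.** I would carry this out by exhibiting each standard basis element $H, E_1,\dots,E_9, e_1,\dots,e_{18}$ as an explicit $\mZ$-linear combination of the $32$ listed classes, proceeding in an order that makes the triangulation obvious. Start from the ``free'' classes in $\mathcal E$: $E_9, e_1, e_5, e_6, e_7, e_8, e_{10}, e_{16}, e_{18}$ are already in the span outright. Next, climb the chains using the telescoping structure of the $u_i$'s. For instance in $C_{4,1}$ (Lemma~\ref{lem-1}(iii)): $u_1 = e_9 - e_{10}$ gives $e_9$ (since $e_{10}\in\mathcal E$); then $u_2 = e_3 - e_9$ gives $e_3$; then $u_3 = F_1 - 2e_1 - e_3 - e_6$ gives $F_1$. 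Similarly in $C_{16,11}$ (Lemma~\ref{lem-1}(ii), reading the chain appropriately): $e_{17} - e_{18}$ gives $e_{17}$; $e_2 - e_{17}$ gives $e_2$; $F_2 - 2e_2 - e_4 - e_7 - e_{17} - e_{18}$ then expresses $F_2$ once $e_4$ is in hand; and the sub-chain $e_4 - e_{11}$, $e_{11}-e_{12}$, $e_{12}-e_{13}$, $e_{13} - e_{14} - e_{15} - e_{16}$ ties together $e_4, e_{11}, e_{12}, e_{13}, e_{14}, e_{15}$ modulo $e_{16}\in\mathcal E$. Do the analogous walk up $C_{22,15}$ (Lemma~\ref{lem-1}(i)) to pick up $E_7, E_4, E_1, E_8, E_2, E_5$, the class $H - E_1 - E_2 - E_3$, and $e_3 + e_4 + e_5 + e_9 + \cdots + e_{14}$ versus $E_7$. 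Finally use $\mathcal D = \{\t A,\ E_3 - E_6,\ E_6 - E_9\}$: since $E_9\in\mathcal E$ we get $E_6$, hence $E_3$, hence (from $H - E_1 - E_2 - E_3$) $H$; the remaining $e_i$ and $\t A$ (together with $u_1 = \t B - e_5 - e_8$ of $C_{2,1}$ giving $\t B$) are then pinned down by back-substitution. One checks that every one of the $28$ basis vectors has been reached, so the $32$ classes span.

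**Main obstacle.** The only subtlety — and the one genuinely worth being careful about — is the \emph{integrality}: a triangular elimination like the above produces each basis element with \emph{integer} coefficients only if, at each stage, the ``new'' class appears with coefficient $\pm 1$ in the next relation. Every chain relation $u_i = e_{k} - e_{k-1} - (\text{already-known})$ in Lemma~\ref{lem-1} indeed has the unknown entering with coefficient $1$, and the two ``$-2e_i$'' terms (in $F_1 - 2e_1 - \cdots$ and $F_2 - 2e_2 - \cdots$) are harmless because $e_1, e_2$ are resolved \emph{before} those relations are used. So the induced map $\mZ^{32}\to H_2(Z;\mZ)$ is surjective, not merely of finite index. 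I would also double-check the one place where the chain labeling could bite — that the ordering of $u_i$'s in $C_{16,11}$ and $C_{22,15}$ matches the plumbing convention stated in the ``$C_{p,q}$'' notation box, so that ``telescoping from the $\mathcal E$-end'' really is available — but this is bookkeeping, not a conceptual difficulty. In short: the proof is a single explicit Gaussian elimination over $\mZ$, the statement reduces to checking it stays integral, and I expect essentially no obstruction beyond writing the substitutions out cleanly.
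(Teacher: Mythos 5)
Your proposal is correct: the explicit triangular elimination over $\mZ$ (starting from the nine classes of $\mathcal E$, telescoping up the chains of Lemma~\ref{lem-1}, and finishing with $\mathcal D$ to recover $E_6$, $E_3$ and $H$) does reach all $28$ standard generators $H, E_1,\dots,E_9, e_1,\dots,e_{18}$ with integer coefficients, and this is precisely the routine verification the paper omits when it asserts that ``one can easily get'' Lemma~\ref{lem-2}. The only cosmetic caveat is that resolving $e_4$ requires first extracting $e_{14}, e_{15}$ from the $C_{22,15}$ relations before using the tail of $C_{16,11}$, so the two chains must be interleaved rather than processed one after the other — which your outline already anticipates.
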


\begin{lemma}
\label{lem-3}
 $H_2(Z_0, \partial{B_{22,15}} \cup \partial{B_{16,11}} \cup \partial{B_{4,1}}
 \cup \partial{B_{2,1}}; \mZ)$ is spanned by the images of homology classes
 representing all generators in
 $C_{22,15} \cup C_{16,11} \cup C_{4,1} \cup C_{2,1} \cup {\mathcal D}
 \cup {\mathcal E}$ under a composition of homomorphisms
 $H_2(Z; \mZ) \rightarrow
 H_2(Z, C_{22,15} \cup C_{16,11} \cup C_{4,1} \cup C_{2,1}; \mZ) \cong
 H_2(Z_0, \partial{B_{22,15}} \cup \partial{B_{16,11}} \cup \partial{B_{4,1}}
 \cup \partial{B_{2,1}}; \mZ)$.
\end{lemma}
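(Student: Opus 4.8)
The statement to be proved is Lemma~\ref{lem-3}: that $H_2(Z_0, \partial B_{22,15}\cup\partial B_{16,11}\cup\partial B_{4,1}\cup\partial B_{2,1};\mZ)$ is spanned by the images, under the natural composite
\[
H_2(Z;\mZ)\longrightarrow H_2(Z,\,C_{22,15}\cup C_{16,11}\cup C_{4,1}\cup C_{2,1};\mZ)\cong
H_2(Z_0,\,\partial B_{22,15}\cup\partial B_{16,11}\cup\partial B_{4,1}\cup\partial B_{2,1};\mZ),
\]
of the homology classes listed in Lemma~\ref{lem-1} together with $\mathcal D$ and $\mathcal E$. The plan is to deduce this purely formally from Lemma~\ref{lem-2} plus the exactness of the long exact sequence of the pair. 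First I would justify the displayed isomorphism: writing $C:=C_{22,15}\cup C_{16,11}\cup C_{4,1}\cup C_{2,1}$, this is the union of the four plumbed configurations, which is a regular neighborhood of the four chains of $\mP^1$'s, and $Z_0=Z\setminus\operatorname{int}(C)$; by excision $H_\ast(Z,C;\mZ)\cong H_\ast(Z_0,\partial C;\mZ)$, and since rationally blowing down replaces $C$ by the rational balls $B_{p,q}$ along the same boundary lens spaces, $\partial C=\partial B_{22,15}\cup\partial B_{16,11}\cup\partial B_{4,1}\cup\partial B_{2,1}$. So the target group in the statement is canonically $H_2(Z,C;\mZ)$.

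Next I would run the long exact sequence of the pair $(Z,C)$:
\[
H_2(C;\mZ)\longrightarrow H_2(Z;\mZ)\stackrel{q}{\longrightarrow} H_2(Z,C;\mZ)\longrightarrow H_1(C;\mZ).
\]
Each $C_{p,q}$ is a plumbing along a tree (here a linear chain) of $2$-spheres, so it is simply connected and in particular $H_1(C_{p,q};\mZ)=0$; hence $H_1(C;\mZ)=0$ and $q$ is surjective. Therefore $H_2(Z,C;\mZ)$ is spanned by $q$ applied to any spanning set of $H_2(Z;\mZ)$. By Lemma~\ref{lem-2}, the classes of the $u_i$'s in the four configurations together with $\mathcal D\cup\mathcal E$ span $H_2(Z;\mZ)$; applying $q$ and then the excision isomorphism gives exactly the asserted spanning set for $H_2(Z_0,\partial B_{22,15}\cup\partial B_{16,11}\cup\partial B_{4,1}\cup\partial B_{2,1};\mZ)$.

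I would also record the (harmless, but worth noting) point that the images of the $u_i$-classes are in fact redundant: each $u_i$ lies in $C$, so $q([u_i])$ is the image of a class from $H_2(C;\mZ)$ and hence $q$ kills the classes coming from $C$ up to the relations in the exact sequence — but since the lemma only claims ``is spanned by,'' including them does no harm, and keeping them makes the bookkeeping in Lemma~\ref{lem-4} (where one presumably needs the precise images in the rational-ball picture) uniform. The only genuine content is thus Lemma~\ref{lem-2} (already granted) and the two standard facts that a linear plumbing is simply connected and that excision identifies $H_2(Z,C;\mZ)$ with $H_2(Z_0,\partial C;\mZ)$. There is essentially no obstacle here; the one place to be a little careful is checking that $Z_0$ is genuinely the complement of an open regular neighborhood of the four disjoint chains — i.e.\ that the four configurations are disjoint and that their plumbing neighborhoods can be taken disjoint — which is exactly the content of the Main Construction in Section~\ref{sec-1} (they are constructed as four disjoint configurations in $Z$), so this is already in hand.
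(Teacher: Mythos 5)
Your proof is correct and follows essentially the same route as the paper: excision identifies $H_2(Z,C;\mZ)$ with $H_2(Z_0,\partial C;\mZ)$, the quotient map $H_2(Z;\mZ)\to H_2(Z,C;\mZ)$ is surjective, and then Lemma~\ref{lem-2} finishes the argument. Your added justification of the surjectivity via $H_1(C;\mZ)=0$ for the simply connected linear plumbings is a detail the paper merely asserts, and it is correct.
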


\begin{proof}
 Since an induced homomorphism $H_2(Z; \mZ) \rightarrow
 H_2(Z, C_{22,15} \cup C_{16,11} \cup C_{4,1} \cup C_{2,1}; \mZ)$
 by an inclusion is surjective and
 $H_2(Z, C_{22,15} \cup C_{16,11} \cup C_{4,1} \cup C_{2,1}; \mZ)$
 is isomorphic to $H_2(Z_0, \partial{C_{22,15}} \cup \partial{C_{16,11}}
 \cup \partial{C_{4,1}} \cup \partial{C_{2,1}}; \mZ)
 = H_2(Z_0, \partial{B_{22,15}} \cup \partial{B_{16,11}} \cup
 \partial{B_{4,1}} \cup \partial{B_{2,1}}; \mZ)$ by an excision principle,
 the statement follows from Lemma~\ref{lem-2}.
\end{proof}

\begin{lemma}
\label{lem-4}
 Suppose that $\partial_{\ast}: H_2(Z_0, \partial{B_{22,15}} \cup
 \partial{B_{16,11}} \cup \partial{B_{4,1}} \cup \partial{B_{2,1}};\mZ)
 \rightarrow H_1(\partial{B_{22,15}} \cup \partial{B_{16,11}}
 \cup \partial{B_{4,1}} \cup \partial{B_{2,1}};\mZ) =
 H_1(\partial{B_{22,15}};\mZ) \oplus H_1(\partial{B_{16,11}};\mZ)
 \oplus H_1(\partial{B_{4,1}};\mZ) \oplus H_1(\partial{B_{2,1}};\mZ)$
 $= \mZ/{22^2}\mZ \oplus \mZ/{16^2}\mZ \oplus \mZ/{4^2}\mZ \oplus \mZ/{2^2}\mZ$
 is a homomorphism induced by a boundary map $\partial: (Z_0, \partial Z_0)
 \rightarrow \partial Z_0$.
 And let $i_{\ast}: H_1(\partial{B_{22,15}} \cup
 \partial{B_{16,11}} \cup \partial{B_{4,1}} \cup \partial{B_{2,1}};\mZ)
 = \mZ/{22^2}\mZ \oplus \mZ/{16^2}\mZ \oplus \mZ/{4^2}\mZ \oplus \mZ/{2^2}\mZ
 \rightarrow H_1(B_{22,15} \cup B_{16,11} \cup B_{4,1} \cup B_{2,1};\mZ)
 = \mZ/22\mZ \oplus \mZ/16\mZ \oplus \mZ/4\mZ \oplus \mZ/2\mZ$
 be a homomorphism induced by an inclusion $i$.
 Then we have \\
 $(0) \ \partial_{\ast}(u_i) = (0,0,0,0)
         \stackrel{i_{\ast}}{\longrightarrow} (0,0,0,0)$ for any class
         $u_i \in C_{22,15} \cup C_{16,11} \cup C_{4,1} \cup C_{2,1}$ \\
 $(i) \ \partial_{\ast}(\t A) = (0,0,0,2)
         \stackrel{i_{\ast}}{\longrightarrow} (0,0,0,0)$ \\
 $(ii)\ \partial_{\ast}(E_3-E_6) = (10,0,0,0)
         \stackrel{i_{\ast}}{\longrightarrow} (10,0,0,0)$ \\
 $(iii)\ \partial_{\ast}(E_6-E_9) = (0,0,0,0)
         \stackrel{i_{\ast}}{\longrightarrow} (0,0,0,0)$ \\
 $(iv)\ \partial_{\ast}(E_9) = (0,13,3,0)
         \stackrel{i_{\ast}}{\longrightarrow} (0,13,3,0)$ \\
 $(v) \  \partial_{\ast}(e_1) = (0,0,6,0)
         \stackrel{i_{\ast}}{\longrightarrow} (0,0,2,0)$ \\
 $(vi)\  \partial_{\ast}(e_5) = (19,0,0,1)
         \stackrel{i_{\ast}}{\longrightarrow} (19,0,0,1)$ \\
 $(vii)\ \partial_{\ast}(e_6) = (1,0,3,0)
         \stackrel{i_{\ast}}{\longrightarrow} (1,0,3,0)$ \\
 $(viii)\ \partial_{\ast}(e_7) = (1,13,0,0)
         \stackrel{i_{\ast}}{\longrightarrow} (1,13,0,0)$ \\
 $(ix)\ \partial_{\ast}(e_8) = (1,0,0,1)
         \stackrel{i_{\ast}}{\longrightarrow} (1,0,0,1)$ \\
 $(x) \ \partial_{\ast}(e_{10}) = (19,0,1,0)
         \stackrel{i_{\ast}}{\longrightarrow} (19,0,1,0)$ \\
 $(xi)\  \partial_{\ast}(e_{16}) = (329,1,0,0)
         \stackrel{i_{\ast}}{\longrightarrow} (21,1,0,0)$ \\
 $(xii)\ \partial_{\ast}(e_{18}) = (0,188,0,0)
         \stackrel{i_{\ast}}{\longrightarrow} (0,12,0,0)$.
\end{lemma}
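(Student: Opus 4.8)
\textbf{Proof proposal for Lemma~\ref{lem-4}.}

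The plan is to compute $\partial_\ast$ and $i_\ast$ class by class, entirely inside the chain complex picture, by expressing each homology class of $\mathcal D\cup\mathcal E$ in terms of a geometrically chosen basis of $H_2(Z;\mZ)$ and tracking what it does near each of the four plumbed configurations. First I would fix, for each chain $C_{p,q}$ ($= C_{22,15},\,C_{16,11},\,C_{4,1},\,C_{2,1}$), the standard presentation $H_1(\partial B_{p,q};\mZ)=H_1(L(p^2,pq-1);\mZ)=\mZ/p^2\mZ$ together with a chosen generator realized by a normal circle $\delta_{p,q}$ to the first $(-b_1)$-sphere $u_1$ in the chain; the inclusion-induced map $i_\ast\colon\mZ/p^2\mZ\to\mZ/p\mZ$ is then the canonical reduction (this is why the second column of each line in the statement is just the first reduced mod $p$). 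The class $(0)$ is immediate: a sphere $u_i$ lying inside one of the four configurations is null-homologous in $Z_0$ rel boundary (it has been pushed off into $B_{p,q}$), so its image under $\partial_\ast$ is zero.

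Next, for a general class $\alpha\in H_2(Z;\mZ)$ among $\mathcal D\cup\mathcal E$, I would compute $\partial_\ast(\alpha)$ componentwise. The $C_{p,q}$-component of $\partial_\ast(\alpha)$ is determined by the linking data of $\alpha$ with that configuration: writing the image of $\alpha$ in $H_2(Z_0,\partial Z_0;\mZ)$ and applying the connecting map lands in $H_1(\partial B_{p,q};\mZ)$, and this element is computed from the intersection numbers $\alpha\cdot u_j$ ($j=1,\dots,k$) of $\alpha$ with the spheres of that chain via the (inverse of the) linking/plumbing matrix of $C_{p,q}$ — concretely, $\partial_\ast(\alpha)$ has $C_{p,q}$-coordinate $\sum_j (\alpha\cdot u_j)\,m_j \pmod{p^2}$, where $m_j$ is the entry of the appropriate column of the inverse intersection matrix expressed in the chosen normal-circle generator. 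So the real content is: (a) record the intersection numbers of each $\alpha\in\{\t A, E_3-E_6, E_6-E_9, E_9, e_1, e_5, e_6, e_7, e_8, e_{10}, e_{16}, e_{18}\}$ with each $u_j$ in each of the four chains, using the explicit representatives from Lemma~\ref{lem-1} and the blow-up bookkeeping in the Main Construction; and (b) invert the four small plumbing matrices (sizes $9,7,3,1$) once and for all, normalizing so that the generator matches $\delta_{p,q}$. Then every one of the twelve lines $(i)$–$(xii)$ is a finite linear-algebra computation, and applying $i_\ast$ (reduction mod $p$ in each slot) gives the second column. I would present this as one worked example — say $(vi)$, $\partial_\ast(e_5)=(19,0,0,1)$, which touches all three nontrivial-for-$e_5$ configurations — and then state that the remaining cases are identical in nature.

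The main obstacle is bookkeeping fidelity rather than any conceptual difficulty: one must correctly read off from the fourteen-step blow-up sequence which exceptional curve meets which sphere of which chain and with what multiplicity, and one must be careful about the orientation/sign conventions that fix the generator of each $\mZ/p^2\mZ$ (getting a sign wrong replaces an entry $x$ by $p^2-x$, which still reduces correctly mod $p$ but must be made consistent across the lemma, since later the precise lifts — not just their reductions — are what feed into the computation of $H_1(Z_{22,16,4,2};\mZ)$). In particular the two largest entries, $\partial_\ast(e_{16})=(329,1,0,0)$ and $\partial_\ast(e_{18})=(0,188,0,0)$, come from $e_{16}$ and $e_{18}$ sitting deep inside the iterated blow-ups that created the $(-10)$- and $(-8)$-chains, so their linking coefficients with $u_1$ of $C_{22,15}$ and $C_{16,11}$ are the largest and the inverse-matrix computation there is the one most prone to arithmetic slips; I would double-check those two against the consistency requirement that $i_\ast\partial_\ast$ of any class in $\mathcal D\cup\mathcal E$ which bounds in $Z$ must vanish, and against the relation coming from the Main Construction expressing $e_{16}$ (resp. $e_{18}$) in terms of the curves of the ambient configuration. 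Once all twelve images are in hand, the lemma is proved; it is exactly this table that Proposition~\ref{pro-1} and the subsequent argument use to pin down $H_1(W;\mZ)=\mZ/2\mZ$.
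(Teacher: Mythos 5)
Your proposal is correct and follows essentially the same route as the paper: the authors likewise fix the generator of each $H_1(\partial B_{p,q};\mZ)=\mZ/p^2\mZ$ as a normal circle of the end sphere $u_1$ (realized as $\partial C_{22,15}\cap e_6$, $\partial C_{16,11}\cap e_{16}$, $\partial C_{4,1}\cap e_{10}$, $\partial C_{2,1}\cap e_5$, exactly your convention), compute each $\partial_\ast$ from the intersection numbers of the given class with the spheres of each chain via the meridian relations of the plumbing (equivalently your inverse-intersection-matrix formulation), treat $(0)$ and $(iii)$ by disjointness from $\partial Z_0$, work a single representative example, and obtain $i_\ast$ as the generator-to-generator reduction $\mZ/p^2\mZ\to\mZ/p\mZ$. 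The only difference is which line is worked in detail ($(iv)$ in the paper versus your $(vi)$), so no further comment is needed.
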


\begin{proof}
 $(0)$ and $(iii)$ follow from the fact that they do not intersect
 with $\partial Z_0$.
 For the rest, we choose generators
 $\{\alpha=(1,0,0,0), \beta=(0,1,0,0), \gamma=(0,0,0,1), \delta=(0,0,0,1)\}$
 of $H_1(B_{22,15};\mZ) \oplus H_1(B_{16,11};\mZ) \oplus H_1(B_{4,1};\mZ)
 \oplus H_1(B_{2,1};\mZ)$ so that $\alpha, \beta, \gamma$ and $\delta$ are
 represented by circles $\partial C_{22,15} \cap e_6$ (equivalently $\partial
 C_{22,15} \cap e_7$ or $\partial C_{22,15} \cap e_8$),
 $\partial C_{16,11} \cap e_{16}$, $\partial C_{4,1} \cap e_{10}$
 and $\partial C_{2,1} \cap e_{5}$ (equivalently $\partial C_{2,1} \cap
 e_8$), respectively.
 Then one can easily see that the rest of computation follows from
 Figure~\ref{Fig-Z}. For example, we can compute $(iv)$ as follows:
 Note that an exceptional curve $E_9$ intersects with $(-8)$-curve
 in $C_{16,11}$ and it also intersects with $(-6)$-curve in $C_{4,1}$.
 Since $\partial_{*}(E_9) (=$ a normal circle of $(-8)$-curve)
 is $13 \beta \in H_1(\partial B_{16,11})$
 and $\partial_{*}(E_9) (=$ a normal circle of $(-6)$-curve)
 is $3 \gamma \in H_1(\partial B_{4,1})$, we have
 $\partial_{*}(E_9)=(0,13,3,0)$.
 The images of $i_{\ast}$ follow from the fact that
 $i_{\ast}: H_1(\partial B_{p,q})= \mZ/{p^2}\mZ
 \rightarrow H_1(B_{p,q})= \mZ/p\mZ$ sends a generator to a generator,
 i.e. $i_{\ast}(1)=1$.

\end{proof}

 Finally, using Lemma~\ref{lem-4} and Proposition~\ref{pro-1} above,
 we compute $H_1(Z_{22,16,4,2}; \mZ)$.

\begin{proposition}
\label{pro-2}
 $H_1(Z_{22,16,4,2};\mZ) = \mZ/2\mZ.$
\end{proposition}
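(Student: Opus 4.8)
The plan is to combine Proposition~\ref{pro-1} with the explicit boundary computations of Lemma~\ref{lem-4} to pin down $H_1(W;\mZ)$, since by Proposition~\ref{pro-1} it suffices to show that $H_1(W;\mZ)=\mZ/2\mZ$ rather than $0$. Recall from the proof of Proposition~\ref{pro-1} that $H_1(W;\mZ)$ is the cokernel of $\partial_\ast\colon H_2(W,\partial W;\mZ)\to H_1(\partial W;\mZ)=\mZ/4\mZ$, and that $[\t A|_W]$ maps to $2\t\delta$; so what must be ruled out is the possibility that some class in $H_2(W,\partial W;\mZ)$ maps to an \emph{odd} multiple of $\t\delta$, i.e. to a generator of $\mZ/4\mZ$. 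Equivalently, and this is the more convenient bookkeeping, I would work with the full rational-blow-down picture: $H_1(Z_{22,16,4,2};\mZ)$ is the cokernel of the composite
\[
H_2(Z_0,\partial B_{22,15}\cup\partial B_{16,11}\cup\partial B_{4,1}\cup\partial B_{2,1};\mZ)
\xrightarrow{\ \partial_\ast\ } \bigoplus H_1(\partial B_{p,q};\mZ)
\xrightarrow{\ i_\ast\ } \bigoplus H_1(B_{p,q};\mZ)=\mZ/22\oplus\mZ/16\oplus\mZ/4\oplus\mZ/2,
\]
and by Lemma~\ref{lem-3} the source is spanned by the images of the classes in $C_{22,15}\cup C_{16,11}\cup C_{4,1}\cup C_{2,1}\cup\mathcal D\cup\mathcal E$.

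The concrete computation then proceeds as follows. By Lemma~\ref{lem-4}(0) every $u_i$ in the four configurations dies, so the cokernel is the quotient of $\mZ/22\oplus\mZ/16\oplus\mZ/4\oplus\mZ/2$ by the subgroup $G$ generated by the twelve vectors $i_\ast\partial_\ast$ listed in items (i)--(xii): namely
\[
(0,0,0,0),\ (10,0,0,0),\ (0,0,0,0),\ (0,13,3,0),\ (0,0,2,0),\ (19,0,0,1),
\]
\[
(1,0,3,0),\ (1,13,0,0),\ (1,0,0,1),\ (19,0,1,0),\ (21,1,0,0),\ (0,12,0,0).
\]
First I would knock out the last coordinate: $(19,0,0,1)$ and $(1,0,0,1)$ together give $(18,0,0,0)\in G$, and subtracting one of them shows the $\mZ/2$ summand is entirely hit, so modulo $G$ we may set the fourth coordinate to $0$ and are left to compute $(\mZ/22\oplus\mZ/16\oplus\mZ/4)/\bar G$. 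Next use $(21,1,0,0)$, i.e.\ $(-1,1,0,0)$ modulo $22$ in the first slot, to eliminate the $\mZ/16$ factor in terms of the first coordinate: then $(0,13,3,0)$ becomes $(13,0,3,0)$, $(1,13,0,0)$ becomes $(14,0,0,0)$, and $(0,12,0,0)$ becomes $(12,0,0,0)$ — so $\gcd(12,14)=2$ lands in the first slot, and combined with $(10,0,0,0)$ and the (now cleared) relations we reduce the first coordinate modulo $2$. At that point the surviving relations in the $\mZ/4$ slot, chiefly $(0,0,2,0)$ from (v) and $(0,0,3,0)$-type combinations from (iv),(vii),(x), generate all of $\mZ/4$ (since $\gcd(2,3)=1$), killing that factor too. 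The upshot is that $\mZ/22\oplus\mZ/16\oplus\mZ/4\oplus\mZ/2$ modulo $G$ is exactly $\mZ/2\mZ$, the residual $\mZ/2$ coming from the fact that the first coordinate is only ever reduced modulo $2$, never modulo an odd number.

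I would present this as a short sequence of elementary row operations on the $12\times 4$ integer matrix of relations (reduced in $\mZ/22\oplus\mZ/16\oplus\mZ/4\oplus\mZ/2$), concluding $H_1(Z_{22,16,4,2};\mZ)=\mZ/2\mZ$; then by Proposition~\ref{pro-1} this forces $H_1(W;\mZ)=\mZ/2\mZ$ as well, consistent with everything. The only real subtlety — the step deserving the most care — is making sure the reductions are carried out \emph{in the finite cyclic groups}, not in $\mZ$: for instance $(21,1,0,0)$ being usable to clear the $\mZ/16$ slot relies on $1$ generating $\mZ/16$, and the various "$\gcd$" conclusions must be checked against the relevant modulus (e.g.\ that $10$ together with the derived even element $2$ generates the same subgroup of $\mZ/22$ as $2$ does). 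None of this is deep, but it is exactly where an off-by-a-factor error would silently turn the answer into $0$ or $\mZ/4\mZ$, so I would verify each relation against Figure~\ref{Fig-Z} and Lemma~\ref{lem-4} before collapsing the computation.
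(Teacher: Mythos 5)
Your overall strategy is the paper's: the same decomposition $Z_{22,16,4,2}=Z_0\cup\{B_{22,15}\cup B_{16,11}\cup B_{4,1}\cup B_{2,1}\}$, the same reliance on Lemmas~\ref{lem-3} and~\ref{lem-4}, and the same identification of $H_1(Z_{22,16,4,2};\mZ)$ with the cokernel of $i_\ast\circ\partial_\ast$. The difference is only in the last step: the paper uses Proposition~\ref{pro-1} to bound $H_1$ above by $\mZ/2\mZ$ and then merely exhibits one element, $(0,0,0,1)$, that is not hit, via a clean parity argument mod $2$; you instead try to compute the full cokernel by row reduction. That is a legitimate (if longer) route, and carried out correctly it does give $\mZ/2\mZ$ without needing Proposition~\ref{pro-1} at all.

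However, your reduction contains a genuine error in the $\mZ/4$ slot, and it is not a harmless one. You claim that $(0,0,2,0)$ together with ``$(0,0,3,0)$-type combinations from (iv),(vii),(x)'' generate all of $\mZ/4$ in the third coordinate, ``killing that factor.'' No such pure $(0,0,3,0)$ relation exists in the subgroup $G$: after eliminating $b$ the relevant vectors are $(13,0,3,0)$, $(1,0,3,0)$ and $(19,0,1,0)$, each with an \emph{odd} first coordinate, and every other relation has even first coordinate ($10,18,12,14,\dots$), so the first coordinate of any element of $G$ with third coordinate odd is itself odd and can never be cleared. Hence $(0,0,1,0)\notin G$. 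Worse, if your claim were true it would destroy your conclusion: $(0,0,1,0)\in G$ together with $(19,0,1,0)\in G$ gives $(19,0,0,0)\in G$, and since $\gcd(19,22)=1$ this yields $(1,0,0,0)\in G$, whence $(0,0,0,1)$, $(0,1,0,0)$, $(0,0,1,0)$ all lie in $G$ and $H_1=0$. The correct bookkeeping is that the $\mZ/4$ generator $c$ is not killed but \emph{identified} with the surviving class: $(19,0,1,0)$ gives $c=-19a$, and once $2a=0$ is established (e.g.\ from $\gcd(10,12,14,18,22)=2$) one finds $a=b=c=d$ is a single class of order exactly $2$, so the cokernel is $\mZ/2\mZ$. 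The paper's mod-$2$ argument --- adding the second and third parity equations to get $a_5+a_6+a_8+a_9\equiv 0$ and contradicting the first and fourth --- sidesteps this entire bookkeeping and is the safer way to finish.
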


\begin{proof}
 First let us consider the following commutative diagram between two
 exact homology sequences with $\mZ$-coefficients for pairs
 $(Z_0, \partial{Z_0})$
 and $(Z_{22,16,4,2}, B_{22,15} \cup B_{16,11} \cup B_{4,1} \cup B_{2,1})$:
\tiny{
\begin{eqnarray*}
 & \hspace{-1 em} H_2(Z_0, \partial{B_{22,15}} \cup
 \partial{B_{16,11}} \cup \partial{B_{4,1}} \cup \partial{B_{2,1}})
 \stackrel{\partial_{\ast}}{\rightarrow} H_1(\partial{B_{22,15}} \cup
 \partial{B_{16,11}} \cup \partial{B_{4,1}} \cup \partial{B_{2,1}})
 \stackrel{j_{\ast}}{\rightarrow} H_1(Z_0) \rightarrow 0 \\
 & \hspace{-10 em} i_{\ast} \downarrow \cong \hspace{15 em} \downarrow i_{\ast} \\
 & \hspace{-1 em} H_2(Z_{22,16,4,2}, B_{22,15} \cup B_{16,11} \cup B_{4,1}
 \cup B_{2,1}) \stackrel{\partial_{\ast}}{\rightarrow}
 H_1(B_{22,15} \cup B_{16,11} \cup B_{4,1} \cup B_{2,1})
 \stackrel{j_{\ast}}{\rightarrow} H_1(Z_{22,16,4,2}) \rightarrow 0
\end{eqnarray*}}

\normalsize
 \n where $i_{\ast}$ and $j_{\ast}$ are induced homomorphisms by
 inclusions $i$ and $j$ respectively,
 and $\partial_{\ast}$ is an induced homomorphism by a boundary map $\partial$.
 Note that the first $i_{\ast}$ is an isomorphism by an excision
 principle and the second $i_{\ast}$ is surjective.

 Next, we try to show that
 $\partial_{\ast}: H_2(Z_{22,16,4,2}, B_{22,15} \cup B_{16,11} \cup B_{4,1}
 \cup B_{2,1};\mZ) \longrightarrow H_1(B_{22,15} \cup B_{16,11} \cup B_{4,1}
 \cup B_{2,1};\mZ)$ is not surjective. Equivalently, we prove that
 the composition $H_2(Z_0, \partial B_{22,15}
 \cup \partial B_{16,11} \cup \partial B_{4,1} \cup \partial B_{2,1}; \mZ)
 \stackrel{\partial_{\ast}}{\longrightarrow}
 H_1(\partial B_{22,15} \cup \partial B_{16,11} \cup \partial B_{4,1}
 \cup \partial B_{2,1}; \mZ) \stackrel{i_{\ast}}{\longrightarrow}
 H_1(B_{22,15} \cup B_{16,11} \cup B_{4,1} \cup B_{2,1};\mZ)$
 is not surjective. In particular, we claim that $(0,0,0,1)$ is not
 in the image of $i_{\ast} \circ \partial_{\ast}: H_2(Z_0, \partial B_{22,15}
 \cup \partial B_{16,11} \cup \partial B_{4,1} \cup \partial B_{2,1}; \mZ)
 \longrightarrow H_1(B_{22,15} \cup B_{16,11} \cup B_{4,1} \cup B_{2,1};\mZ)
 = \mZ/22\mZ \oplus \mZ/16\mZ \oplus \mZ/4\mZ \oplus \mZ/2\mZ$.\\

\n {\em Claim: $(0,0,0,1) \not \in Im(i_{\ast} \circ
 \partial_{\ast})$} - Suppose that an element $(0,0,0,1)$ is in
 $Im(i_{\ast} \circ \partial_{\ast})$.
 Then, by Lemma~\ref{lem-3} and Lemma~\ref{lem-4} above,
 an element $(0,0,0,1)$ should be a linear combination of
 $\{(10,0,0,0), (0,13,3,0), (0,0,2,0), (19,0,0,1), (1,0,3,0),
 (1,13,0,0),$ $(1,0,0,1), (19, 0, 1,0), (21,1,0,0), (0,12,0,0) \}$
 over integers.
 In other words, there should exist a set of integers
 $\{a_1, a_2, \ldots, a_{10}\}$ satisfying the following four linear equations:
\begin{eqnarray*}
 10a_1 + 19a_4 + a_5 + a_6 + a_7 + 19a_8 +21a_9 & \equiv & 0 \pmod{22} \\
 13a_2 + 13a_6 + a_9 + 12a_{10} & \equiv & 0 \pmod{16} \\
 3a_2 +2a_3 + 3a_5 + a_8 & \equiv & 0 \pmod{4} \\
 a_4 + a_7 & \equiv & 1 \pmod{2}
\end{eqnarray*}

 But one can easily check that there is no solution satisfying the
 four linear equations above simultaneously because of the parity -
 As mod 2, the above four linear equations become
 \begin{eqnarray*}
 a_4 + a_5 + a_6 + a_7 + a_8 + a_9 & \equiv & 0  \\
 a_2 + a_6 + a_9  & \equiv & 0  \\
 a_2 + a_5 + a_8 & \equiv & 0  \\
 a_4 + a_7 & \equiv & 1 .
\end{eqnarray*}
 By adding the second and the third equations, we get
 $a_5 + a_6 + a_8 + a_9  \equiv 0$.
 But it is impossible  by the first and the fourth equations!

\m

 Finally, since $i_{\ast} \circ \partial_{\ast}$ is not surjective,
 we conclude from the commutative diagram above that
 $H_1(Z_{22,16,4,2}; \mZ) \neq 0$. Hence Proposition~\ref{pro-1}
 implies that $H_1(Z_{22,16,4,2}; \mZ)$ $=\mZ/2\mZ$.
\end{proof}

\b

\section{A surface with $K^2=2$ and $H_1=\mZ/3\mZ $}
\label{sec-4}

 In this section we construct a surface of general type with
 $p_g=0, K^2=2$ and $H_1=\mZ/3\mZ$ using the same technique.
 As we mentioned in the Introduction,
 the key ingredient in the construction of such a surface
 is to find a right rational elliptic surface $Z'$.
 Once we have a right rational elliptic surface $Z'$ for $K^2=2$ and
 $H_1=\mZ/3\mZ $, the remaining argument is the same as before.
 Hence we describe here only how to get such a rational elliptic surface
 $Z'$ which is following.

\m

\n{\bf Main Construction.} We begin with a rational elliptic surface
 $Y'=\mP^2\sharp 9\overline{\mP}^2$ which has one $I_8$-singular
 fiber, one $I_2$-singular fiber, and two nodal singular fibers
 used in Section 3 in~\cite{PPS} (Figure~\ref{Fig-YY}).
\begin{figure}[hbtb]
 \begin{center}
 \setlength{\unitlength}{1mm}
 \includegraphics[height=3.5cm]{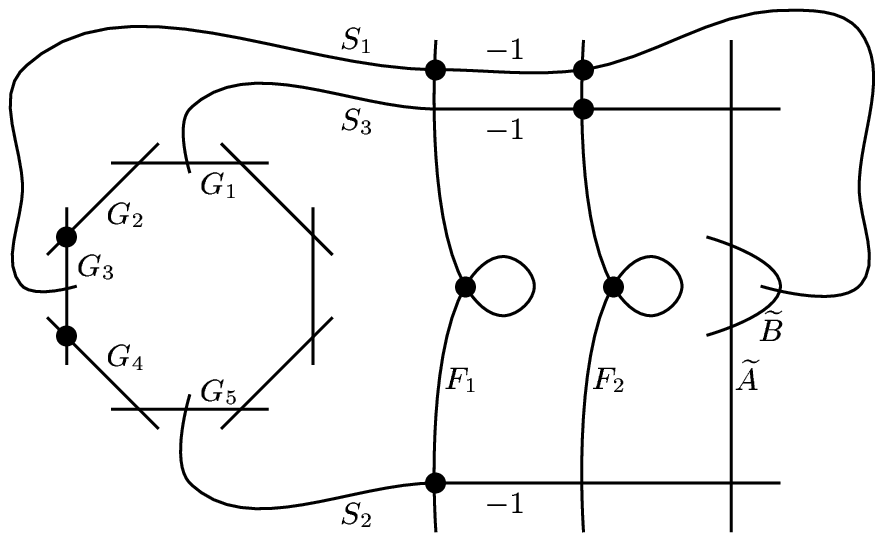}
 \end{center}
 \vspace*{-1 em}
 \caption{A rational elliptic surface $Y'=\mP^2\sharp 9\overline{\mP}^2$}
 \label{Fig-YY}
\end{figure}
 From this rational elliptic surface $Y'$, we first blow up at two singular
 points in the nodal fibers $F_1, F_2$ on $Y'$.
 Then the proper transforms $\t F_1, \t F_2$ of $F_1, F_2$ will be
 rational $(-4)$-curves whose homology classes are $[\t F_1]=[F_1]-2e_1$
 and $[\t F_2]=[F_2]-2e_2$, where $e_1, e_2$ are new exceptional curves
 in $Y'\sharp 2\overline{\mP}^2$ coming from two blowing ups.
 Next, we blow up at two black circled points lying in an $I_8$-singular
 fiber and we also blow up at four black circled points which
 are intersection points between sections $S_i$ and nodal fibers
 $F_i$ in Figure~\ref{Fig-YY} above.
 We blow up again twice at the intersection point between the proper
 transform of $F_i$ and the exceptional curve $e_i$ in the total transform
 of $F_i$ for $i=1,2$.
 Then we get a rational surface $Z':= Y' \sharp 10{\overline \mP}^2$
 which contains three disjoint configurations:
 $C_{9,5}={\overset{-2}{\circ}}-{\overset{-7}{\circ}}
 -{\overset{-2}{\circ}}-{\overset{-2}{\circ}}-{\overset{-3}{\circ}}$
 (which consists of $\t e_1, \t F_1, \t S_3, G_1, \t G_2$),
 $C_{9,5}={\overset{-2}{\circ}}-{\overset{-7}{\circ}}
 -{\overset{-2}{\circ}}-{\overset{-2}{\circ}}-{\overset{-3}{\circ}}$
 (which consists of $\t e_2, \t F_2, \t S_2, G_5, \t G_4$)
 and $D_{2,3}={\overset{-2}{\circ}}-{\overset{-3}{\circ}}
 -{\overset{-4}{\circ}}$ (which consists of
 $\t B, \t S_1, \t G_3$) (Figure~\ref{Fig-ZZ}).
 Note that the boundary of a configuration $C_{9,5}$ is a lens space
 $L(81,-44)$ which also bounds a rational ball $B_{9,5}$, and
 the boundary of a configuration $D_{2,3}$ is a lens space
 $L(18,-11)$ which bounds a Milnor fiber $M_{2,3}$.
 It is well known that the Milnor fiber $M_{2,3}$ is not a rational ball
 but a negative definite $4$-manifold with second Betti number $1$
 (cf.~\cite{Man}).

\begin{figure}[hbtb]
 \begin{center}
 \setlength{\unitlength}{1mm}
 \includegraphics[height=4 cm]{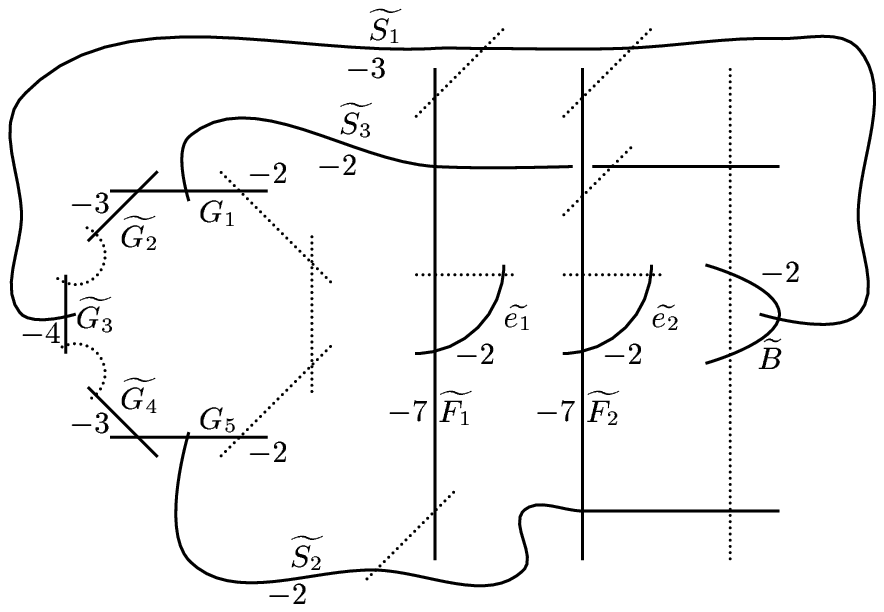}
 \end{center}
 \vspace{-1 em}
 \caption{A rational surface $Z'= Y' \sharp 10{\overline \mP}^2$}
 \label{Fig-ZZ}
\end{figure}

\m

 Then we contract these three disjoint chains $C_{9,5},\, C_{9,5},\, D_{2,3}$
 of $\mP^1$'s from $Z'$ so that it produces a normal projective
 surface, denoted by $X$, with three permissible singularities.
 Using the same technique as in~\cite{LP}, we are able to prove that
 $X$ has a $\mQ$-Gorenstein smoothing and a general fiber $X_t$ of
 the $\mQ$-Gorenstein smoothing is a minimal
 complex surface of general type with $p_g=0$ and $K^2=2$.
 Furthermore, the general fiber $X_t$ is diffeomorphic to a rational
 blow-down $4$-manifold $Z'_{9,9,2}$ which is obtained from
 $Z'= Y' \sharp 10{\overline \mP}^2$ by replacing three disjoint
 configurations $C_{9,5},\, C_{9,5}$ and $D_{2,3}$
 with corresponding Milnor fibers $B_{9,5},\, B_{9,5}$ and $M_{2,3}$
 respectively.
 Finally, using a similar technique in Section 3,
 it is easy to prove that the rational blow-down $4$-manifold
 $Z'_{9,9,2}$ has $H_1(Z'_{9,9,2}; \mZ)=\mZ/3\mZ$,
 which is Theorem~\ref{thm-main2}.

\b
\b

\end{document}